\newcommand{\secref}[1]{Sec.~\ref{#1}}
\newcommand{\lemref}[1]{Lemma~\ref{#1}}
\newcommand{\theoref}[1]{Thm~\ref{#1}}
\renewcommand{\eqref}[1]{(\ref{#1})}
\newcommand{\figref}[1]{Fig.~\ref{#1}}
\newcommand{\assumptionsref}[1]{Assumption~\ref{#1}}
\newtheorem{theorem}{Theorem}
\newtheorem{remark}{Remark}
\newtheorem{lemma}{Lemma}
\newtheorem{assumptions}{Assumption}
\newcommand{\R}{\mathbb{R}}
\newcommand{\Z}{\mathbb{Z}}
\newcommand{\N}{\mathbb{N}}
\newcommand{\sign}{\mathrm{sgn}}
\DeclareMathOperator*{\minimize}{\mathrm{minimize}}
\newcommand{\ift}{\mathrm{~if~}}
\newcommand{\andt}{\mathrm{~and~}}
\newcommand{\fort}{\mathrm{~for~}}
\renewcommand{\S}{\mathcal{S}}
\newcommand{\T}{\mathcal{T}}
\newcommand{\A}{\mathcal{A}}
\newcommand{\U}{\mathcal{U}}
\newcommand{\Y}{\mathcal{Y}}
\newcommand{\C}{\mathcal{C}}
\def\Z{\mathcal{Z}}
\def\N{\mathcal{N}}
\newacro{pm}[PM]{Persistent Monitoring}
\newacro{tsp}[TSP]{Traveling Salesman Problem}
\newacro{pmp}[PMP]{Pontryagin Minimum Principle}
\newacro{lls}[LLS]{Linear Least Squares}
\newacro{ocp}[OCP]{Optimal Control Problem}
\newacro{ipa}[IPA]{Infinitesimal Perturbation Analysis}
\newcommand{\target}{\vartheta}
\newcommand{\agent}{s}
\newcommand{\switchpos}{\psi}
\newcommand{\trackinter}{\phi}
\newcommand{\trackcomb}{\alpha}
\newcommand{\Kp}{K_p}
\newcommand{\Ki}{K_i}
\newcommand{\eventtime}{\tau}
\newcommand{\param}{\theta}
\newcommand{\allparams}{\mathbf{\theta}}
\newcommand{\stepdirection}{p}
\newcommand{\dtarget}{\dot{\target}}
\newcounter{inlineenum}
\renewcommand{\theinlineenum}{\alph{inlineenum}}
\newenvironment{inlineenum}
  {\unskip\ignorespaces\setcounter{inlineenum}{0}%
   \renewcommand{\item}{\refstepcounter{inlineenum}{\textit{\theinlineenum})~}}}
  {\ignorespacesafterend}
\definecolor{agentcolor}{HTML}{326da8}
\definecolor{targetcolor}{HTML}{c84646}
\newcommand{\mindist}{\Delta^\mathrm{min}}
\newcommand{\figwidth}{\columnwidth}
\begin{document}

\title{Optimal Persistent Monitoring of Mobile Targets in One Dimension}

\author{\IEEEauthorblockN{}
\IEEEauthorblockA{\textit{Division of Systems Engineering} \\
\textit{Boston University}
}
}

\author{Jonas Hall$^{1}$, Sean Andersson$^{1,2}$, Christos G. Cassandras$^{1,3}$
\thanks{This work was supported in part by NSF under grants ECCS-
1931600, DMS-1664644, CNS-1645681, CNS-2149511, by AFOSR under FA9550-19-1-0158, by ARPA-E under DE-AR0001282, by NIH under 1R01GM117039-01A1, and by
the MathWorks.}%
\thanks{$^{1}$Division of Systems Engineering, Boston University, USA}%
\thanks{$^{2}$Division of Mechanical Engineering, Boston University, USA}%
\thanks{$^{3}$Department of Electrical and Computer Engineering}%
\thanks{{\tt\small \{hallj, sanderss, cgc\}@bu.edu}}
}%

\clearpage\maketitle
\thispagestyle{empty}

\begin{abstract}
    This work shows the existence of optimal control laws for persistent monitoring of mobile targets in a one-dimensional mission space and derives explicit solutions. The underlying performance metric consists of minimizing the total uncertainty accumulated over a finite mission time. We first demonstrate that the corresponding optimal control problem can be reduced to a finite-dimensional optimization problem, and then establish existence of an optimal solution. Motivated by this result, we construct a parametric reformulation for which an event based gradient descent method is utilized with the goal of deriving (locally optimal) solutions. We additionally provide a more practical parameterization that has attractive properties such as simplicity, flexibility, and robustness. Both parameterizations are validated through simulation.
\end{abstract}
\section{Introduction}
In persistent monitoring problems, a team of autonomous agents is tasked to monitor a given environment. The problem is closely related to coverage control~\cite{cortes2004coverage}, the key difference being that the environment is assumed to change dynamically, so that all points of interest must be revisited persistently. The problem has received a lot of attention over the last decade due to its broad applicability across a range of applications, including environmental monitoring~\cite{yu2015persistent}, data harvesting~\cite{khazaeni2017event}, and particle tracking~\cite{shen2010tracking}. Persistent monitoring problems are notoriously difficult to solve because of highly non-convex and non-smooth costs and dynamics along with the computational complexity due to the problem's combinatorial nature. 

In this paper we consider the persistent monitoring problem applied to a finite number of targets in a one-dimensional connected mission space, similar to the formulations in~\cite{smith2011persistent, zhou2018optimal}. One-dimensional mission spaces are common in applications that involve rivers, roads in a transportation network, or power lines, to name a few. Unlike previous work, however, in addition to an internal state describing a measure of uncertainty, we allow \emph{the targets themselves to be mobile} in the mission space. The control objective considered is the minimization of the overall uncertainty over a finite mission time. The uncertainty dynamics considered in this work can be viewed as a flow model with constant input flow, whereas the output flow depends on the target-agent distance. This model is commonly used in the literature~\cite{cassandras2012optimal,smith2011persistent}, though other dynamics have been discussed as well. For example, in~\cite{yu2019scheduling}, the authors consider internal states with arbitrary linear time-invariant dynamics, with a control applied only when an agent is monitoring the target. In other applications, such as environmental monitoring tasks, the internal state is modeled as a Gaussian random field~\cite{lan2013planning}. Similarly, the literature contains various performance metrics, e.g., that of maximizing the number of observed events~\cite{yu2015persistent}, minimizing the inter-observation time~\cite{alamdari2014persistent}, or minimizing estimation errors~\cite{lan2013planning}.

The persistent monitoring problem is often split into a higher and lower level. The higher level consists of path planning and scheduling target visiting sequences. Lan and Schwager proposed the use of rapidly-exploring random cycles in order to find an optimal periodic trajectory~\cite{lan2013planning}. Other formulations abstract the mission space to a graph, where each node represents a region of interest and the edges capture the travel times~\cite{alamdari2014persistent,welikala2021event}. The graph-based formulation was further generalized in~\cite{hari2019generalized}, in which targets were grouped into clusters and only a single target within each cluster must be visited. Due to the combinatorial nature of the scheduling task, the use of mixed-integer optimization is natural~\cite{maini2018persistent}, and its similarity to the Traveling Salesman Problem has motivated solving the higher level scheduler with such methods~\cite{ostertag2022trajectory}. On the lower level, the problem consists of optimizing the agents' motion control, either along a given path, or in order to realize a given schedule. For example, in~\cite{smith2011persistent} and \cite{song2014optimal}, the authors optimize agent velocities along a given path, while in \cite{boldrer2022multi} a configuration-based solution is sought such that each point on a given path is revisited with a constant frequency.  

The contribution of this paper consists of advances in the one-dimensional persistent monitoring problem of mobile targets. In particular, we prove that the considered infinite-dimensional \ac{ocp} can be reduced to an optimization problem of finite dimension, and that the \ac{ocp} always admits a solution. Though similar results can be found for static targets, we provide novel (to the best of the authors' knowledge) results for mobile targets. Additionally, we provide two parametric reformulations and utilize the \ac{ipa} method~\cite{cassandras2010perturbation} in order to compute gradients and solve the parametric optimization problem locally.

The remainder of this paper is structured as follows. \secref{sec:problem:formulation} introduces the problem in detail and establishes the \ac{ocp}~\eqref{min:ocp}. In \secref{sec:optimal:control:characterization}, we show how the problem can be reduced to a finite-dimensional optimization problem. Two parametric formulations are provided and compared in \secref{sec:parameterizations}. By applying the \ac{ipa} calculus, we describe how perturbations of the parameters affect the cost function in \secref{sec:ipa}. We also compare the parameterizations in numerical experiments in \secref{sec:numerical:experiments}. \secref{sec:conclusion:future} concludes the paper and outlines ideas for future work.
\section{Problem Formulation}\label{sec:problem:formulation}
Our problem formulation builds upon the foundation of~\cite{cassandras2012optimal,zhou2018optimal}, extending it to include \emph{mobile} targets.
%The majority of this problem formulation coincides with that of~\cite{cassandras2012optimal,zhou2018optimal}, however, the extension to mobile targets is made. 
Let $\S = \R$ be the work space in which we consider the persistent monitoring problem over a finite time interval $[0,T]$. We introduce a set of mobile agents $\A = \{1, 2, \dots, N\}$, and denote the position of agent $j \in \A$ by $\agent_j(t) \in \S$. We assume that the agents follow first-order dynamics $\dot{\agent}_j(t) = u_j(t)$, where each $u_j$ belongs to the admissible control set $\U = \{ [0,T] \to [-1,1] \}$, and the initial position $s_j(0) \in \S$ is given. In addition to the agents, there is a set of mobile targets $\T = \{1, 2, \dots, M\}$, and we denote by $\target_i(t)$ and $\dtarget_i(t)$ the position and velocity of $i \in \T$ at $t \in [0,T]$. The monitoring function of agent $j$ for target $i$ is given by
\begin{equation}\label{eq:monitoring:function}
    p_{ij}(\target_i(t), \agent_j(t)) = \max\left\{0, 1 - \frac{|\target_i(t) - \agent_j(t)|}{r_j} \right\},
\end{equation}
where $r_j > 0$ is the sensing range of agent $j$. The joint monitoring function of target $i$ from all agents is then
\begin{equation}\label{eq:joint:monitoring}
    P(\target_i(t), \agent(t)) =  1 - \prod_{j \in \A}\left(1 - p_{ij}(\target_i(t), \agent_j(t)) \right).
\end{equation}

The question remains of what is being monitored. We associate each target with an internal state $R_i(t) \geq 0$ describing a measure of uncertainty. This state is assumed to grow with a constant rate of $A_i > 0$ when not monitored, and to change with a dynamic rate of $A_i - B_i P(\target_i(t), \agent(t))$ for a given $B_i > A_i$ otherwise. To ensure that the uncertainty remains nonnegative, we limit the uncertainty reduction for targets in the set
$\Z(t) = \{ i \in \T \mid R_i(t) = 0 \andt A_i < B_i P(\target_i(t), \agent(t)) \}$. Thus, the uncertainty dynamics are captured by
\begin{equation}\label{eq:uncertainty:dynamics}
    f_{R_i}(t) = \begin{cases}
        0, & \ift i \in \Z(t), \\
        A_i - B_i P(\target_i(t), \agent(t)), &\text{otherwise}.
    \end{cases}
\end{equation} 
The goal of the persistent monitoring problem is to minimize the overall uncertainty, i.e., we seek a control policy which solves the \ac{ocp}
\begin{mini!}
    {u \in \U^N}{J(u) = \frac{1}{T} \int_0^T \sum_{i \in \T} R_i(t) dt}{\label{min:ocp}}{}
    \addConstraint{\hspace*{0.1em}\dot{s}(t)}{= u(t),}{\quad \dot{R}(t) = f_R(t).}
\end{mini!}

Note that the function~\eqref{eq:uncertainty:dynamics} depends on time only through $\agent(t)$, $R(t)$, and $\target(t)$. However, to keep notation simple, we neglect the explicit dependence on the states, noting only that it is a function of time. This convention will be used throughout the paper. 

\begin{remark}
The choice of the monitoring function is not restricted to \eqref{eq:monitoring:function}. Any monitoring function can be chosen, as long as it \begin{inlineenum}
    \item has compact support, and
    \item is monotonically increasing with the distance between its arguments.
\end{inlineenum}
Similarly, the results in this paper directly extend to other joint monitoring functions, as long as they are monotonically increasing with each of the individual monitoring functions, e.g., $\max_{j \in \A} p_{ij}(\target_i(t), \agent_j(t))$.
\end{remark}

\section{Optimal Control Characterization}\label{sec:optimal:control:characterization}

As in previous work~\cite{zhou2018optimal}, a standard Hamiltonian analysis can be applied to~\eqref{min:ocp} to reveal that the optimal control can be expressed in a parametric form. To better understand the structure of the problem, however, here we show the existence of such a parametric reformulation directly. In order to prove the main result in \theoref{theo:finite-dimensional} below, we first show that any control law can be adapted to one of the form given in \eqref{eq:optimal:control} without increasing the cost. Applying this result repeatedly shows that any control can be replaced by one which can be partitioned into a finite number of intervals during which the control is either  \begin{inlineenum}
    \item constant and maximal, or
    \item equal to the velocity of a target.
\end{inlineenum} 
From this we conclude that there exists a compact parameter space, with which we can ultimately show the existence of a parametrically represented optimal control. These theoretical results rely on the following assumptions for the motions of the targets. 
\begin{assumptions}\label{assumptions:one}
    For all $t \in [0,T]$, $i,i_1 \neq i_2 \in \T$, we assume that (i) target velocities are no larger than the maximal speed of the agents, i.e., their absolute velocities are bounded by one, and (ii) agents never sense two targets at the same time, i.e., there exists  $\mindist > 0$ such that $|\target_{i_1}(t) - \target_{i_2}(t)| \geq 2\max_{j \in \A}\{r_j\} + \mindist.$
\end{assumptions}

The first assumption is fundamental, whereas the second one is of a technical nature, without which it becomes increasingly difficult to characterize the optimal control. However, in \secref{sec:numerical:experiments} we show that the considered parameterizations are adaptable to simultaneous sensing scenarios. Similarly, if a target's velocity exceeds the target control bounds, then the given parameterization with bounded controls provides a heuristic for the more general case. Our first lemma provides a characterization of the optimal control over a fixed interval. Before we begin, let us denote by `$\sign$' the sign function that maps all negative real numbers to negative one, zero to zero, and all positive real numbers to one.
\begin{lemma}\label{lem:optimal:control}
    Let $(u, \agent)$ be any feasible control-trajectory pair. Consider an interval $(t_1, t_2)$ over which the agent $j \in \A$ only ever senses one target $i \in \T$. Then, there exist $\check{t}, \hat{t} \in [t_1, t_2]$ such that the modified control law 
    \begin{equation}\label{eq:optimal:control}
        u_j^\prime(t) = \begin{cases}
            u_j(t), &\ift t \notin (t_1, t_2),\\
            \sign\left(\target_i(t_1) - \agent_j(t_1) \right), &\ift t \in (t_1, \check{t}), \\
            \dtarget_i(t), &\ift t \in (\check{t}, \hat{t}_{\phantom{}}), \\
            \sign\left(\agent_j(t_2) - \target_i(t_2)\right), &\ift t \in (\hat{t}, t_2),
        \end{cases}
    \end{equation}
    satisfies $J(u^\prime) \leq J(u)$. 
\end{lemma}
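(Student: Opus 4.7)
The plan is to construct $u_j'$ explicitly, verify that it satisfies the boundary constraints, and then show the integrated cost does not increase.

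First, I would choose $\check t,\hat t\in[t_1,t_2]$. Let $\sigma=\sign(\target_i(t_1)-s_j(t_1))$ and $\sigma'=\sign(s_j(t_2)-\target_i(t_2))$. Define $\check t$ as the first time in $[t_1,t_2]$ at which the phase-1 ray meets the target, i.e.\ $s_j(t_1)+\sigma(\check t-t_1)=\target_i(\check t)$; by \assumptionsref{assumptions:one}\,(i) the map $\tau\mapsto s_j(t_1)+\sigma(\tau-t_1)-\target_i(\tau)$ is monotone, so the root, when it lies in $[t_1,t_2]$, is unique. Analogously define $\hat t$ as the last time in $[t_1,t_2]$ at which the backward phase-3 ray from $s_j(t_2)$ reaches the target. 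If $\hat t<\check t$ I collapse phase 2 by setting $\check t=\hat t$ at the unique point where the phase-1 and phase-3 arcs intersect; admissibility of $u_j$ guarantees such a crossing in $[t_1,t_2]$. By construction $s_j'(t_2)=s_j(t_2)$, so $s_j'\equiv s_j$ on $[t_2,T]$; and because agent $j$ only ever senses $\target_i$ on $(t_1,t_2)$, the modification leaves $p_{i'j}\equiv 0$ for $i'\neq i$, so $R_{i'}\equiv R_{i'}'$ for every $i'\neq i$. Therefore
\[J(u')-J(u)=\frac{1}{T}\int_{t_1}^{T}\bigl(R_i'(t)-R_i(t)\bigr)\,dt,\]
and it suffices to show this integral is nonpositive.

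Second, I would reduce the cost comparison to a cumulative monitoring inequality. Setting $Q(t)=\prod_{k\neq j}(1-p_{ik}(t))$, a quantity untouched by the modification, we have $P(\target_i,s)=1-Q(t)(1-p_{ij}(t))$, so in the unsaturated regime the $R_i$-dynamics are linear in $p_{ij}$ with nonnegative weight $B_iQ(t)$, yielding
\[R_i(t)-R_i'(t)=B_i\int_{t_1}^{t}Q(\tau)\bigl(p_{ij}'(\tau)-p_{ij}(\tau)\bigr)\,d\tau.\]
Thus it suffices to prove $\int_{t_1}^{t}Q(\tau)(p_{ij}'(\tau)-p_{ij}(\tau))\,d\tau\ge 0$ for all $t\in[t_1,t_2]$. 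I would handle this phase by phase. In phase 1 the modified trajectory closes the signed distance $s_j-\target_i$ at the maximum rate permitted by \assumptionsref{assumptions:one}\,(i), so a direct ODE comparison gives $|s_j'(t)-\target_i(t)|\le|s_j(t)-\target_i(t)|$ pointwise and hence $p_{ij}'\ge p_{ij}$. In phase 2 the modified trajectory tracks $\target_i$, giving $p_{ij}'\equiv 1$.

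The hard part will be phase 3, where the modified control deliberately moves away from the target and the pointwise inequality can reverse. To close the gap I plan a swap/rearrangement argument: because $\hat t$ is chosen as late as the endpoint constraint allows, any monitoring ``lost'' in phase 3 is necessarily preceded by at least as much monitoring ``gained'' over $(t_1,\hat t)$; and since reducing $R_i(\tau)$ earlier contributes to $\int_{t_1}^T R_i\,dt$ with the larger weight $T-\tau$, transferring monitoring mass from late to early times under the modified control cannot raise the cost. A short check then confirms that saturation of $R_i$ at zero only tightens this comparison rather than reversing it. Putting the three phases together gives $R_i'(t)\le R_i(t)$ on $[t_1,T]$ and hence $J(u')\le J(u)$.
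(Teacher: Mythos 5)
Your construction of $\check t$ and $\hat t$ and your treatment of phases 1 and 2 match the paper's argument, but your handling of phase 3 contains a genuine gap. The premise that ``the pointwise inequality can reverse'' in phase 3 is false, and the rearrangement argument you substitute for it is not a proof. Concretely: take $\gamma_2=\sign(\agent_j(t_2)-\target_i(t_2))=+1$ without loss of generality. For $t\in(\hat t,t_2)$ the modified trajectory is $\agent_j'(t)=\agent_j(t_2)-(t_2-t)$. Since the original trajectory must still reach $\agent_j(t_2)$ at time $t_2$ with speed at most one, $\agent_j(t)\ge \agent_j(t_2)-(t_2-t)=\agent_j'(t)$. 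On the other hand, the maximality of $\hat t$ (it is the supremum of times at which the backward full-speed ray meets the target), together with $\target_i(t_2)<\agent_j(t_2)$ and continuity, forces $\target_i(t)<\agent_j'(t)$ on $(\hat t,t_2]$. Hence $\target_i(t)<\agent_j'(t)\le\agent_j(t)$: the modified agent is sandwiched between the target and the original agent, so $|\agent_j'(t)-\target_i(t)|\le|\agent_j(t)-\target_i(t)|$ pointwise --- exactly the same one-sided speed-limit comparison you used in phase 1. This is how the paper closes the argument: the distance inequality holds on all of $(t_1,t_2)$, monotonicity of $p_{ij}$ gives $p_{ij}'\ge p_{ij}$ everywhere, and $R_i'\le R_i$ follows with no time-reweighting.

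As written, your phase-3 substitute would not survive scrutiny even if it were needed. The claim that transferring monitoring mass from late to early times cannot raise the cost because earlier reductions carry weight $T-\tau$ is not justified by anything in the sketch: the uncertainty rate is $A_i-B_iP$ with $P$ bounded and clipped at $R_i=0$, so ``monitoring mass'' is not a conserved quantity that can be freely relocated in time, and your integral identity $R_i-R_i'=B_i\int Q\,(p_{ij}'-p_{ij})$ already presupposes the unsaturated regime. You would need to establish the running inequality $\int_{t_1}^{t}Q(\tau)\bigl(p_{ij}'(\tau)-p_{ij}(\tau)\bigr)\,d\tau\ge 0$ for every $t$, and the sketch gives no mechanism for doing so. Fortunately none of this is necessary: replace the rearrangement step with the sandwich argument above and the proof closes. (A minor further point: your assertion that $R_{i'}\equiv R_{i'}'$ for $i'\ne i$ is stronger than needed and not obviously true, since the modified trajectory could in principle enter the sensing range of another target; but any such extra sensing only decreases cost, so the inequality direction is unaffected.)
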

\begin{proof}
    In what follows we construct a trajectory $\agent_j^\prime$ which coincides with that of $\agent_j$ everywhere outside of $(t_1,t_2)$, and is at least as close to target $i$ as $\agent_j$ during $(t_1, t_2)$. Let us begin by capturing the signs in~\eqref{eq:optimal:control} by $\gamma_1 = \sign(\target_i(t_1) - \agent_j(t_1))$ and $\gamma_2 = \sign(\agent_j(t_2) - \target_i(t_2))$, which are constant throughout this proof. We then introduce the auxiliary times
    \begin{equation*}
        \begin{split}
            \tilde{t}_1 &= \inf\{ t > t_1 \mid \agent_j(t_1) + \gamma_1(t - t_1) = \target_i(t) \}, \\
            \tilde{t}_2 &= \sup\{ t < t_2 \mid \target_i(t) + \gamma_2(t_2 - t) = \agent_j(t_2) \}.
        \end{split}
    \end{equation*}    
    Geometrically speaking, $\tilde{t}_1$ is the earliest arrival time at the target (possibly infinity), and $\tilde{t}_2$ is the latest departure time from the target such that the agent still reaches the point $\agent_j(t_2)$ (possibly negative infinity).
    If $\tilde{t}_1 \leq \tilde{t}_2$, then we set $\check{t} = \tilde{t}_1$ and $\hat{t} = \tilde{t}_2$ (see~\figref{fig:proof:sketch}). Otherwise, the agent does not reach the target within the given interval, which implies $\gamma_1 = -\gamma_2$, and we find $\check{t} = \hat{t}$ by solving $\agent_j(t_1) + \gamma_1(\check{t} - t) - \gamma_1(t_2 - \check{t}) = \agent_j(t_2)$. 
    The geometric interpretation of this is that the agent moves towards the target at full speed until it must return at full speed in order to end up at $\agent_j(t_2)$.
    Solving for $\check{t}$ yields $\check{t} = \frac{1}{2}\left( \gamma_1(\agent_j(t_2) - \agent_j(t_1)) + (t_1 + t_2) \right)$. Note that $\check{t} \in [t_1, t_2]$, since $| \agent_j(t_2) - \agent_j(t_1)| \leq t_2 - t_1$. 
    
    Now let $\agent_j^\prime$ be the trajectory obtained from $u^\prime$. By construction, this trajectory coincides with that of $\agent_j$ everywhere outside of $(t_1, t_2)$, and all other agents remain unchanged. Additionally, we have $|\target_i(t) - \agent_j^\prime(t)| \leq |\target_i(t) - \agent_j(t)|$ for all $t \in (t_1, t_2)$. Due to the monotonicity of the monitoring functions, 
    %Conequently, \remove{$p_{ij}(\target_i(t), \agent_j^\prime(t)) \geq p_{ij}(\target_i(t), \agent_j(t))$. Since the joint monitoring function is monotonically increasing in each monitoring function, and $i$ is the only sensed target, it follows that $P(\target_i(t), \agent^\prime(t)) \geq P(\target_i(t), \agent(t))$. Thus, from (\ref{eq:uncertainty:dynamics}), $\dot{R}_i(t, \agent^\prime) \leq \dot{R}_i(t, \agent)$, and finally 
    % \begin{equation*}
    %     R_i(t, \agent_j^\prime) - R_i(t, \agent_j) = \int_{t_1}^{t} [\dot{R}_i(\sigma, \agent^\prime) - \dot{R}_i(\sigma, \agent)]  d\sigma \leq 0
    % \end{equation*}    
    % for all $t \in (t_1, t_2)$. This shows that the cost ...} 
    the cost under $u^\prime$ can be no larger than that under $u$.
\end{proof}

\begin{figure}
    \centering
    \includegraphics[width=\figwidth]{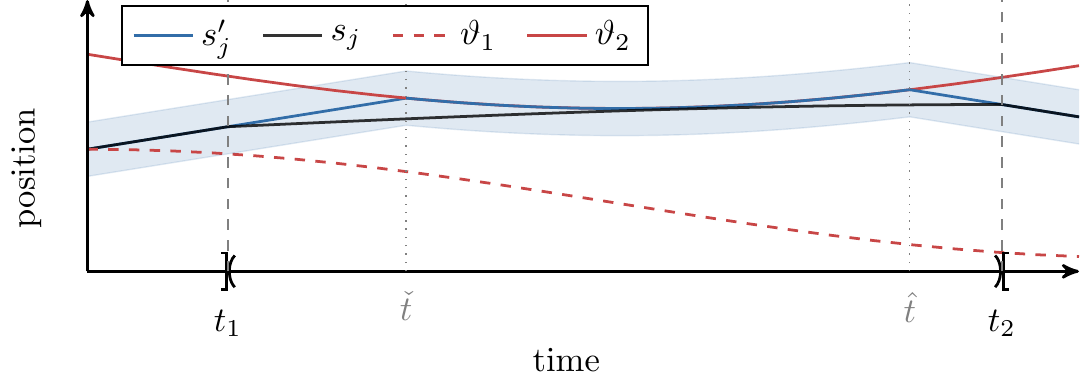}
    \caption{Visualization of the trajectory obtained from control law~\eqref{eq:optimal:control}. Note how the agent only senses the second target over the interval $(t_1, t_2)$. In view of the proof of~\lemref{lem:optimal:control}, the times $\check{t}$ and $\hat{t}$ are equal to the auxiliary times $\tilde{t}_1$ and $\tilde{t}_2$ as tracking is achieved.}
    \label{fig:proof:sketch}
\end{figure}

This lemma allows us to decompose an existing control into sequential phases (or modes) in which only one target is visited. We utilize this control modification for each individual interval to obtain a parameterized control law. For the next lemma we introduce the ceiling function $\lceil \cdot \rceil$, which maps each real number to the smallest integer greater than or equal to the number itself.

\begin{lemma}\label{lem:finite:parameters}
    Let $(u,s)$ be any feasible control-trajectory pair. Then there exists an alternative control law $u^\prime$, which satisfies $J(u^\prime) \leq J(u)$ and is fully described by $n = 5 N \lceil T/\mindist \rceil$ parameters.
\end{lemma}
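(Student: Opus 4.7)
The plan is to iteratively apply \lemref{lem:optimal:control} to each agent's control over a finite decomposition of $[0,T]$, and then to bound both the number of pieces and the number of parameters needed to describe each piece. Fix an agent $j \in \A$. By \assumptionsref{assumptions:one}(ii), at any time $j$ can sense at most one target, so I would partition $[0,T]$ into maximal \emph{phases}, where a phase is a maximal interval $(t_1, t_2)$ over which $j$ senses no target other than a single $i \in \T$ (non-sensing sub-intervals are allowed within a phase, but sensing a different target is not).

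On each phase, I would invoke \lemref{lem:optimal:control} to replace the control with the bang--track--bang form of~\eqref{eq:optimal:control} without increasing $J(u)$. The iteration is well-defined: the modified trajectory is at least as close to $\target_i$ as the original throughout the phase, and by the $\mindist$-separation in \assumptionsref{assumptions:one}(ii) the agent therefore cannot enter the sensing region of any other target during the phase, so the phase partition is preserved by the modification.

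The central estimate is a bound on the number $K_j$ of phases per agent. For two consecutive phases with distinct labels $i_1, i_2$, let $t_a$ be the last instant $j$ senses $\target_{i_1}$ and $t_b$ the first instant it senses $\target_{i_2}$. Combining the sensing inequalities $|\agent_j(t_a) - \target_{i_1}(t_a)| \leq r_j$ and $|\agent_j(t_b) - \target_{i_2}(t_b)| \leq r_j$ with the separation $|\target_{i_1}(t_a) - \target_{i_2}(t_a)| \geq 2\max_k r_k + \mindist$ and the unit-speed bounds on both agents and targets, a triangle-inequality argument yields a positive lower bound on $t_b - t_a$ proportional to $\mindist$, and hence $K_j \leq \lceil T/\mindist \rceil$.

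Each phase is then completely specified by five parameters: the target label $i$ and the four ordered switch times $t_1 \leq \check{t} \leq \hat{t} \leq t_2$ appearing in~\eqref{eq:optimal:control}. Motion on any non-sensing transition between phases can be absorbed into the leading bang segment of the subsequent phase, so it adds no further parameters. Summing over the $N$ agents yields the stated count $n = 5N\lceil T/\mindist \rceil$. The main obstacle I expect is making Step~(1)--(2) watertight: one must verify that enlarging each one-target sensing window by the surrounding non-sensing gaps is legitimate under the hypotheses of \lemref{lem:optimal:control}, and that iterating does not create new sensing episodes involving other targets which would destroy the uniform bound on $K_j$. Everything beyond this is elementary bookkeeping.
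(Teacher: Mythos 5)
Your decomposition is exactly the paper's strategy: absorb the non-sensing gaps into maximal single-target intervals (the paper does this via the times in \eqref{eq:partition:times}), apply \lemref{lem:optimal:control} on each interval, bound the number of intervals by $\lceil T/\mindist \rceil$ using \assumptionsref{assumptions:one}(ii), and count. Two of your worries are easier than you think: the interval-by-interval applications of \lemref{lem:optimal:control} do not interact, because each modification leaves the trajectory unchanged at the interval endpoints and hence leaves every other interval's hypothesis intact; and even if the modified trajectory did graze another target's sensing region, that only decreases uncertainty, so the cost inequality is unaffected. (Your phase-count bound, like the paper's, is stated without the triangle-inequality details; the closing speed between agent and next target is at most $2$, so the honest constant is $\mindist/2$ rather than $\mindist$ --- harmless for the structure of the argument, but worth noting that neither count is airtight as stated.)

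The genuine gap is your parameter list. You claim each phase is completely specified by the target label and the four times $t_1 \leq \check{t} \leq \hat{t} \leq t_2$, but these do not determine the control: the last segment of \eqref{eq:optimal:control} is $\sign(\agent_j(t_2) - \target_i(t_2))$, and the agent can depart the target leftward or rightward with identical switch times, ending at either of $\target_i(\hat{t}) \pm (t_2 - \hat{t})$. The times carry no information about which, and the ambiguity cascades, since the opening bang direction of the next phase depends on where the previous one ended. The paper's five parameters per phase are one target index, the \emph{three} new switch times $\check{t}_\ell, \hat{t}_\ell, \bar{t}_\ell$ (the left endpoint $\bar{t}_{\ell-1}$ belongs to the previous phase, so your four-time count double-books it), and one switching \emph{position} $s_j(\bar{t}_\ell)$, which is precisely what fixes the departure direction. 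This is not optional bookkeeping: the lemma exists to feed \lemref{lem:compact:parameter:space}, where the reconstruction map $H$ in \eqref{eq:optimal:control:interval} and the compact set $\Y_j$ (via the constraint $|s_{\ell-1}-s_0|\leq \bar{t}_{\ell-1}$) are built on exactly these positions. The fix is immediate --- trade the redundant fourth time for the phase-end position --- but as written the control is not ``fully described'' by your parameters, which is the content of the statement.
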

\begin{proof}
    We begin with $u_j^\prime = u_j$ and denote by $(i_1, i_2, \dots, i_k)$ %\in \T^k$ 
    the temporally ordered sequence of the sensed targets along the trajectory $s_j$ satisfying $i_\ell \neq i_{\ell+1}$ for all $\ell$. Note that the length of this sequence $k$ is bounded by $n =  \lceil T/\mindist \rceil$ due to \assumptionsref{assumptions:one}, and without loss of generality we assume $k \geq 1$.
    
    Similar to the target sensing sequence, we seek to find switching times $0 \leq \bar{t}_1 \leq \dots \leq \bar{t}_{k} = T$, and thereby switching positions $s_j(\bar{t}_1), s_j(\bar{t}_2), \dots, s_j(\bar{t}_k)$, such that $\bar{t}_\ell$ describes the end of the sensing phase of target $i_\ell$. More precisely, $\bar{t}_\ell$ is defined as follows. Setting $\bar{t}_0 = 0$, then we define for $1 \leq \ell \leq k-1$ the upper bound 
    \begin{equation}
        \bar{t}_{\ell}^\mathrm{max} = \inf \{ t > \bar{t}_{\ell-1} \mid |\target_{i_{\ell+1}}(t) - \agent_j(t) | = r_j\},
    \end{equation}
    which is the earliest time of sensing the next target, and with that
    \begin{equation}\label{eq:partition:times}
        \bar{t}_{\ell} = \sup \{ t < \bar{t}_{\ell}^\mathrm{max} \mid |\target_{i_{\ell}} - \agent_j(t)| = r_j \}.
    \end{equation}
    By construction, agent $j$ only senses target $i_\ell$ along its trajectory $s_j\vert_{(\bar{t}_{\ell-1}, \bar{t}_{\ell})}$ restricted to $(\bar{t}_{\ell-1}, \bar{t}_\ell)$. According to~\lemref{lem:optimal:control}, replacing $u_j^\prime\vert_{(\bar{t}_{\ell-1}, \bar{t}_{\ell})}$ by~\eqref{eq:optimal:control} does not increase the cost. This replacement yields two more parameters $\check{t}_\ell, \hat{t}_\ell$, which describe the switching times of the modes.
    
    Note that the new control law $u^\prime_j\vert_{(\bar{t}_{\ell-1}, \bar{t}_{\ell})}$ is now fully described by the switching times $\bar{t}_{\ell-1}$, $\check{t}_\ell$, $\hat{t}_\ell$, and $\bar{t}_\ell$, the sensed target $i_\ell$, and the switching positions $s_j(\bar{t}_{\ell-1}), s_j(\bar{t}_\ell)$. 
    
    Repeating this procedure for each sensed target yields a control described by $5k$ parameters ($k$ target indices, $3k$ switching times, and $k$ switching positions). In order to unify the dimensions across the agents, let us embed the parameters into the respective $n$-dimensional spaces by replicating the last parameters. This procedure is repeated for all agents, resulting in a total number of $5Nn$ parameters.      
\end{proof}

From this property, we can immediately conclude that if the \ac{ocp}~\eqref{min:ocp} admits a solution, then there exists a control described by a finite number of parameters. In order to show later that such a solution always exists, we now prove that the resulting parameter space is compact.

\begin{lemma}\label{lem:compact:parameter:space}
    There exists a finite-dimensional and compact parameter space $\Y$ with functions 
    \begin{equation}\label{eq:param:space:functions}
        T : \U^N \to \Y \quad \andt \quad H : \Y \to \U^N,
    \end{equation}
    such that $J(H(T(u))) \leq J(u)$. In addition, the function $H$ can be chosen to be continuous.
\end{lemma}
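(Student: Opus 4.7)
The plan is to extract from \lemref{lem:finite:parameters} a compact parameter space and to build $H$ as the assembly map sending each parameter tuple to the corresponding control. The cost inequality is then immediate from \lemref{lem:finite:parameters}, so the remaining work is to verify compactness of $\Y$ and continuity of $H$.

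First I would enumerate and bound the relevant parameters. For each agent $j \in \A$ and each of the $n = \lceil T/\mindist \rceil$ potential sensing phases, the essential parameters are a target index $i_\ell \in \T$, a phase-boundary time $\bar{t}_\ell \in [0,T]$, and a switching position $\agent_j(\bar{t}_\ell)$, which by the unit-speed bound is confined to the compact interval $\X_j = [\agent_j(0) - T, \agent_j(0) + T]$. The auxiliary times $\check{t}_\ell, \hat{t}_\ell$ need not be treated as independent parameters: they can be derived from $\bar{t}_{\ell-1}, \bar{t}_\ell, \agent_j(\bar{t}_{\ell-1}), \agent_j(\bar{t}_\ell)$ and the target trajectory via the formulas in the proof of \lemref{lem:optimal:control}. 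Setting $\Y_j$ to be the closed subset of $\T^n \times [0,T]^n \times \X_j^n$ cut out by the ordering and feasibility constraints $\bar{t}_{\ell-1} \leq \bar{t}_\ell$ and $|\agent_j(\bar{t}_\ell) - \agent_j(\bar{t}_{\ell-1})| \leq \bar{t}_\ell - \bar{t}_{\ell-1}$, and taking $\Y = \prod_{j \in \A} \Y_j$ (with $\T$ carrying the discrete topology), gives a closed subset of a finite product of compact spaces, hence a compact finite-dimensional space.

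I would define $T : \U^N \to \Y$ to read off this tuple from the construction in the proof of \lemref{lem:finite:parameters}, duplicating the last entry to pad any unused phases; no continuity of $T$ is required. The map $H : \Y \to \U^N$ is the assembly map: for each agent $j$ and each phase $\ell$, it reconstructs the control on $(\bar{t}_{\ell-1}, \bar{t}_\ell)$ via the piecewise rule~\eqref{eq:optimal:control}, using the derived breakpoints $\check{t}_\ell, \hat{t}_\ell$. By construction $H(T(u))$ equals the modified control $u^\prime$ of \lemref{lem:finite:parameters}, so $J(H(T(u))) \leq J(u)$.

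The main obstacle is the continuity of $H$, which I would establish with $\U^N$ endowed with the $L^1([0,T])^N$ topology. On each sheet of $\Y$ obtained by fixing the discrete target indices, the derived times $\check{t}_\ell, \hat{t}_\ell$ vary continuously with the real parameters, being first hitting times of continuous target trajectories, and shifting any breakpoint by $\varepsilon$ alters $H(y)$ only on a set of measure $O(\varepsilon)$ where the two controls differ by at most $2$. The delicate point is the $\mathrm{sgn}$ expressions in~\eqref{eq:optimal:control}, which are discontinuous exactly when $\agent_j(\bar{t}_{\ell-1}) = \target_{i_\ell}(\bar{t}_{\ell-1})$; however, in that configuration the derived $\check{t}_\ell$ collapses to $\bar{t}_{\ell-1}$, so crossing the sign boundary alters $H$ only on a subinterval of vanishing length, which preserves $L^1$-continuity. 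Continuity on each sheet, combined with the discrete topology on the index factor, then yields continuity of $H$ throughout $\Y$.
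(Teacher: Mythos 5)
Your overall architecture matches the paper's: a compact parameter space built as a closed subset of a finite product of $\T^n$, time intervals, and position intervals; $T$ read off from the construction in \lemref{lem:finite:parameters} with padding; $H$ an assembly map; and $L^1$-continuity of $H$ obtained by observing that perturbing breakpoints by $\delta$ changes the control only on sets of measure $O(\delta)$ with deviation bounded by $2$. Your treatment of the $\sign$ degeneracy (when $\agent_j(\bar{t}_{\ell-1}) = \target_{i_\ell}(\bar{t}_{\ell-1})$ the approach phase collapses) is a sound way to handle a point the paper glosses over.

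The genuine divergence is that you drop $\check{t}_\ell, \hat{t}_\ell$ from the parameter tuple and recompute them inside $H$ as first hitting times, and here your continuity claim is wrong as stated: first hitting times of continuous trajectories are in general only semicontinuous in the initial data, not continuous. Concretely, with $\gamma_1 = +1$ the gap $g(t) = \target_i(t) - \agent_j(t_1) - (t - t_1)$ has derivative $\dtarget_i(t) - 1 \le 0$ under \assumptionsref{assumptions:one}, so it is non\-increasing but may plateau at a positive level when the target flees at full speed; a small change in $\agent_j(t_1)$ then makes the derived $\check{t}_\ell$ jump across the entire plateau. The conclusion can still be rescued, because on such a plateau $\dtarget_i(t) = \gamma_1$ almost everywhere, so the two competing control modes coincide there and the $L^1$ distance remains small --- but that argument is absent from your proof, and you would also need to check continuity across the transition between the case $\tilde{t}_1 \le \tilde{t}_2$ and the turn-around case $\check{t} = \hat{t}$ of \lemref{lem:optimal:control}. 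The paper avoids all of this by keeping $\check{t}_\ell, \hat{t}_\ell$ as free coordinates of $\Y$ (with the ordering constraints $\bar{t}_{\ell-1} \le \check{t}_\ell \le \hat{t}_\ell \le \bar{t}_\ell$), so that $H$ never solves a hitting-time problem and its continuity reduces to the elementary breakpoint-shifting estimate; adopting that larger parameter space, or supplying the plateau argument above, closes the gap.
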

\begin{proof}
    Consider $\tilde{\Y} = \T^n \times \R^{n} \times \R^{n} \times \R^{n} \times \R^{n}$, and $y = (i, \bar{t}, \check{t}, \hat{t}, s) \in \tilde{\Y}$. Then, define 
    \begin{equation*}
        \Y_j = \left\{y \in \tilde{\Y} \mid \bar{t}_{\ell-1}\leq \check{t}_\ell \leq \hat{t}_\ell \leq \bar{t}_\ell, |s_{\ell-1}-s_0|\leq \bar{t}_{\ell-1}\forall \ell\right\},
    \end{equation*}
    where $\bar{t}_0 = 0$ and $\bar{t}_n = T$. The dependence on $j \in \A$ only affects $s_0 = s_j(0)$, which is given. The full parameter space is then defined by $\Y = \Y_1 \times \Y_2 \times \dots \times \Y_N$. Note that each $\Y_j$ is bounded and closed, and $\Y$ is thus compact. The function $T : \U^N \to \Y$ is defined by the mapping constructed in the proof of~\lemref{lem:finite:parameters}.

    On the other hand, the function $H : \Y \to \U^N$ simply maps an element $y \in \Y$ to the control $u^\prime(t)$, where 
    \begin{equation}\label{eq:optimal:control:interval}
        u_j^\prime(t) = \begin{cases}
            \sign\left(\target_{i_\ell^j}(\bar{t}^j_{\ell-1}) - s^j_{\ell-1}\right), & t \in [\bar{t}_{\ell-1}^j, \check{t}_\ell^j), \\
            \dtarget_{i_\ell^j}(t), &t \in [\check{t}_{\ell}^j, \hat{t}_\ell^j), \\
            \sign\left(s^j_{\ell} - \target_{i_\ell^j}(\bar{t}^j_{\ell})\right), & t \in [\hat{t}_{\ell}^j, \bar{t}_\ell^j).
        \end{cases}
    \end{equation}
    Then $H(T(u)) = u^\prime$ as in~\lemref{lem:finite:parameters}, and consequently $J(H(T(u))) \leq J(u)$. 

    Finally, we show that $H$ is continuous. Let $\pi_m \to \pi$ be any convergent sequence in the parameter space, and let $v_m = H(\pi_m)$ and $v = H(\pi)$. Let $\varepsilon > 0$ be arbitrarily small and define $\delta = \min\{\frac{\varepsilon}{13Nn}, \frac{\tilde{\delta}}{3}\}$, where $\tilde{\delta} > 0$ is the smallest positive distance between the target positions and switching points that determine the signs in~\eqref{eq:optimal:control:interval} for the parameterization $\pi$. Then, due to the convergence in the parameter space, there exists a $K>0$ such that the absolute value of each component of $\pi_m - \pi$ is bounded by $\delta$ for all $m \geq K$. By the choice of $\delta$, the evaluated signs in~\eqref{eq:optimal:control:interval} agree for all parameterizations $\pi_m$ with $m \geq K$. Now let $(\bar{t}^j_{\ell-1}, \bar{t}^j_{\ell})$ be an interval between switching points given by the parameterization $\pi$. During this interval the control laws $v_m = H(\pi_m)$ and $v = H(\pi)$ can only differ over six intervals of length $\delta$; one each for the beginning and the end of the intervals in~\eqref{eq:optimal:control:interval}. Over those intervals, the control deviation is bounded by $2$. In total we get
    \begin{equation}
        \begin{split}
            \|v_m - v \| &= \sum_{j=1}^N \sum_{\ell = 1}^n \int_{\bar{t}^j_{\ell-1}}^{\bar{t}^j_\ell} |v_{m,j}(t) - v_{j}(t)| dt \\
            &\leq N n 6 2 \delta = 12Nn\delta < \varepsilon.
        \end{split}
    \end{equation}
    This shows that $H$ is continuous.
\end{proof}

The fact that the parameter space is compact leads to the existence of converging subsequences, which we utilize in the next theorem.

\begin{theorem}\label{theo:finite-dimensional}
    Consider the~\ac{ocp}~\eqref{min:ocp}. Under~\assumptionsref{assumptions:one} there exists an optimal control $u^\ast \in \U^N$ and an optimal parameterization $\pi^\ast \in \Y$ such that $H(\pi^\ast) = u^\ast$.
\end{theorem}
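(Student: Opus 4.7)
The plan is to apply the Weierstrass extreme value theorem to the composition $J \circ H$ viewed as a function on the compact parameter space $\Y$ from~\lemref{lem:compact:parameter:space}. The first observation is that~\lemref{lem:compact:parameter:space} directly gives $J(H(T(u))) \leq J(u)$ for every admissible $u \in \U^N$, so with $T(u) \in \Y$ we obtain
\[
\inf_{u \in \U^N} J(u) \;=\; \inf_{\pi \in \Y} J(H(\pi)),
\]
reducing the original infinite-dimensional \ac{ocp}~\eqref{min:ocp} to the minimization of $J \circ H$ on a compact set. (The infimum is finite since $R_i(t) \leq R_i(0) + A_i T$ gives the trivial upper bound $J(u) \leq \sum_{i \in \T}(R_i(0) + A_i T)$.)

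The remaining steps are then standard: $\Y$ is compact and $H : \Y \to \U^N$ is continuous in the $L^1$ topology (both from~\lemref{lem:compact:parameter:space}), so in order to invoke Weierstrass on $J \circ H$ it suffices to prove that $J$ itself is continuous on $\U^N$ equipped with the $L^1$ norm. Granting this, $J \circ H$ is a continuous function on a compact domain, hence attains a minimizer $\pi^\ast \in \Y$, and setting $u^\ast := H(\pi^\ast) \in \U^N$ yields both an optimal control and the parametric representation $u^\ast = H(\pi^\ast)$ required by the theorem.

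The main obstacle is therefore the continuity of $J$ on $(\U^N, \|\cdot\|_{L^1})$, which I would establish by propagating convergence through each layer of the dynamics. Assume $u_m \to u$ in $L^1$. First, since $\agent_m(t) = \agent(0) + \int_0^t u_m(\tau)\, d\tau$, the agent trajectories converge uniformly on $[0,T]$. Second, because each $p_{ij}$ in~\eqref{eq:monitoring:function} is Lipschitz in $\agent_j$ and the joint monitoring $P$ in~\eqref{eq:joint:monitoring} is a finite product of $[0,1]$-valued Lipschitz factors, $P(\target_i(\cdot), \agent_m(\cdot)) \to P(\target_i(\cdot), \agent(\cdot))$ uniformly. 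Third, I would identify the reflected uncertainty dynamics~\eqref{eq:uncertainty:dynamics} with a one-sided Skorokhod problem on $[0,\infty)$, whose unique solution admits the closed form
\[
R_i(t) = X_i(t) + \max\bigl\{0,\, -\inf_{0 \leq \tau \leq t} X_i(\tau)\bigr\}, \quad X_i(t) = R_i(0) + \int_0^t \bigl(A_i - B_i P(\target_i(\sigma), \agent(\sigma))\bigr) d\sigma.
\]
This representation depends continuously on $\agent$ in the supremum norm, so $R_{m,i} \to R_i$ uniformly on $[0,T]$; the dominated convergence theorem then transfers the limit into the time integral defining $J$, yielding $J(u_m) \to J(u)$ and completing the argument.
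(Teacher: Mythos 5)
Your proof is correct and rests on the same skeleton as the paper's: \lemref{lem:compact:parameter:space} supplies the compact parameter space $\Y$, the continuity of $H$, and the cost-reduction property $J(H(T(u)))\leq J(u)$, and the conclusion follows from compactness plus continuity of the cost. The paper phrases this as a minimizing-sequence argument (take $u_k$ with $J(u_k)\to J^\ast$, push through $T$, extract a convergent subsequence in $\Y$, and pass to the limit), whereas you first establish $\inf_{u}J(u)=\inf_{\pi}J(H(\pi))$ and then invoke Weierstrass on $J\circ H$ directly; these are interchangeable. Where you genuinely add value is in the last step: the paper simply asserts \emph{``due to the continuity of the cost function''} without specifying the topology or proving anything, while you identify the relevant topology ($L^1$ on $\U^N$, consistent with the norm used in the continuity proof of $H$), propagate convergence through $u\mapsto s\mapsto P$, and—most substantively—handle the nonsmooth reflected uncertainty dynamics \eqref{eq:uncertainty:dynamics} via the Skorokhod representation $R_i(t)=X_i(t)+\max\{0,-\inf_{\tau\leq t}X_i(\tau)\}$, whose Lipschitz dependence on $X_i$ in the sup norm is exactly what is needed to conclude $R_{m,i}\to R_i$ uniformly. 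That is a real gap in the published argument that your proposal closes cleanly.
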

\begin{proof}
    Note that the cost function $J$ is bounded. Thus, there exists an infimal cost $J^\ast$ and, consequently, a sequence $(u_k)_{k \in \mathbb{N}}$ of controls such that $J(u_k) \to J^\ast$. Denote by $T(u_k) = \pi_k$ the mapped elements of the control sequence into the parameter space $\Y$. Since $\Y$ is compact, the mapped sequence must admit a convergent subsequence $\lim\limits_{m \to \infty} \pi_{k_m} = \pi^\ast \in \Y$. The continuity of $H$ provides 
    \begin{equation*}
        \lim_{m \to \infty} H(\pi_{k_m}) = H\left(\lim_{m \to \infty} \pi_{k_m}\right) = H(\pi^\ast) = u^\ast \in \U^N, 
    \end{equation*}
    which ultimately shows 
    \begin{equation*}
        J(u^\ast) = J\left( \lim_{m \to \infty} H(\pi_{k_m})\right) = \lim_{m \to \infty} J\bigl( H(\pi_{k_m})\bigr) = J^\ast
    \end{equation*}
    due to the continuity of the cost function.
\end{proof}

\begin{figure}
    \centering
    \includegraphics[width=\figwidth]{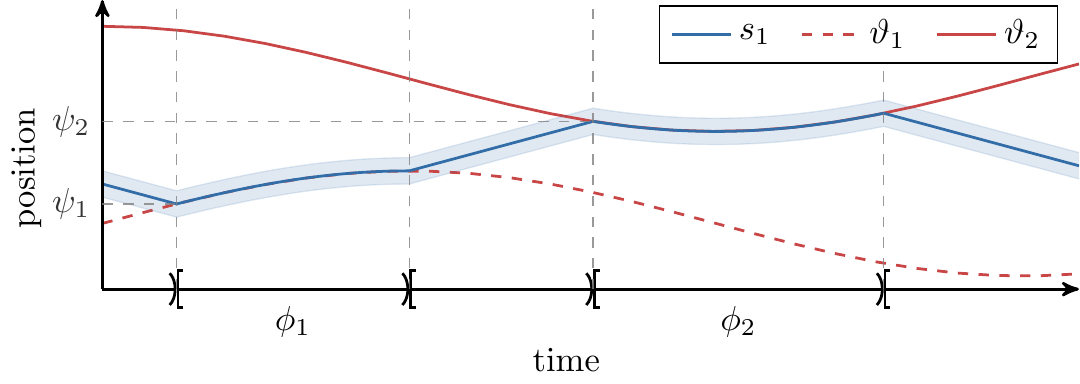}
    \includegraphics[width=\figwidth]{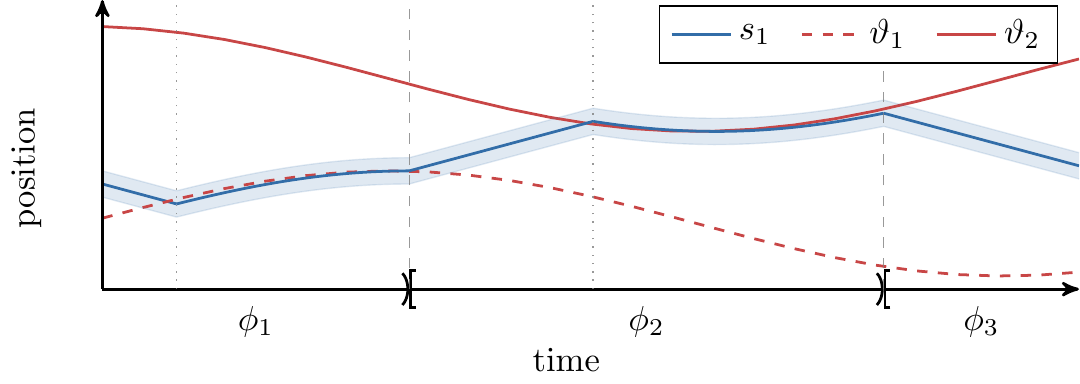}
    \caption{Comparison of the optimal (top) and practical (bottom) parameterizations. The variables below the intervals indicate their length and type, according to the parameterization. Dashed vertical lines represent foreseeable mode changes due to proceeding to the next interval, whereas dotted vertical lines depict observable mode changes due to sufficient reduction of the tracking error.}
    \label{fig:parameterizations}
\end{figure}

\section{Parametric Reformulation}\label{sec:parameterizations}
The above analysis shows that the optimal control problem can be reduced to a much simpler \emph{parametric} optimization problem. In what follows, we will introduce two different parameterizations. The first one provides a generalization of~\eqref{eq:optimal:control:interval}, and we refer to it as the \textit{optimal parameterization}, since it contains an optimal control as established in \theoref{theo:finite-dimensional}. Its drawback is that it requires knowledge of the target velocities, which are difficult to estimate in practice. From a practical viewpoint, the policy~\eqref{eq:optimal:control:interval} can be read as \textit{stay as close to the target as possible}, which motivates the alternative  \textit{practical parameterization}. Though it loses optimality to some degree, it only requires positional feedback of the targets, and has additional advantages, such as a natural robustness towards noise, and simplicity of initialization; details are discussed later on. We now introduce both forms for a fixed agent $j$, while omitting the index for notational simplicity. For guidance, we refer the reader to~\figref{fig:parameterizations}, which illustrates both formulations.

\noindent\textbf{Optimal Parameterization}
Let us first define the set of convex combinations $\C = \{\trackcomb \in [0,1]^M \mid \sum_{i=1}^M\trackcomb_i = 1 \}$. Next, we introduce parameters that can be divided into switching points $\switchpos_1,\dots,\switchpos_L \in \S$, tracking combinations $\trackcomb_1, \dots, \trackcomb_L \in \C$ and tracking durations $\trackinter_1, \dots, \trackinter_L \in [0,T]$, where $L$ is some fixed integer. From this parameterization we obtain a control 
\begin{equation}\label{eq:optimal:param:control}
    u(t) = \begin{cases}
        \sign(\switchpos_\ell - s(\bar{t}_{\ell-1})), &\fort t \in [\bar{t}_{\ell-1}, \check{t}_\ell), \\
        \trackcomb_\ell^\top \dtarget(t), &\fort t \in [\check{t}_{\ell}, \bar{t}_{\ell}),
    \end{cases}
\end{equation}
where $\check{t}_\ell = \bar{t}_{\ell-1} + |\switchpos_\ell - s(\bar{t}_{\ell-1})|$ and $\bar{t}_\ell = \check{t}_\ell + \trackinter_\ell$, initialized with $\bar{t}_0 = 0$.

There are two reasons why we propose the target tracking to be encoded in the form of convex combinations. First and foremost, the feasible set of the parametric description becomes convex. The only difficulty to overcome in an optimization scheme is that of the cost function's nonconvexity and nonsmoothness. Secondly, the resulting control law has some adaptability towards simultaneous sensing scenarios, as discussed in~\secref{sec:numerical:experiments}. Note that if \assumptionsref{assumptions:one} is satisfied, then the optimal control established in \theoref{theo:finite-dimensional} is contained in the parameterization by choosing convex combinations that only select a single target at a time.

\noindent\textbf{Practical Parameterization.} As opposed to defining switching points in the mission space and then matching a convex combination of target velocities, we now propose a control law by directly tracking a convex combination of target positions. 

The parameterization is thus reduced to tracking durations $\trackinter_1, \dots, \trackinter_L \in [0,T]$ and tracking combinations $\trackcomb_1,\dots,\trackcomb_L \in \C$. The switching times are simply given by $\bar{t}_\ell = \bar{t}_{\ell-1} + \trackinter_\ell$, which is again initialized with $\bar{t}_0 = 0$. For $t \in [\bar{t}_{\ell-1}, \bar{t}_{\ell})$ we then obtain a control
\begin{equation}\label{eq:PI:controller}
    u(t) = \min\left( 1, \max\left(-1,  f^\mathrm{PI}_\ell(t) \right)\right),
\end{equation}
via the PI-controller
\begin{equation}
    f^\mathrm{PI}_\ell(t) = \Kp e_\ell(t) + \Ki \int_{\tilde{t}_\ell}^{\max(t, \tilde{t}_\ell)} e_\ell(\sigma) d\sigma,
\end{equation}
with tracking error $e_\ell(t) = \trackcomb_\ell^\top \target(t) - \agent(t)$, and respective proportional and integral feedback gains $\Kp > 0$ and $\Ki > 0$. Since the integral part is useful for driving a small proportional error to zero, we activate this part once the tracking error becomes small enough. Thus, the starting time of the integrator part can be written as
\begin{equation}
    \tilde{t}_\ell = \inf\{ t \geq \bar{t}_{\ell-1} \mid |\Kp e_\ell(t)| \leq \varepsilon_\mathrm{tol} \},
\end{equation}
for some desired switching tolerance $\varepsilon_\mathrm{tol} \in (0,1)$. The motivation behind this strategy is that the beginning of a new tracking interval typically leads to a large proportional feedback gain due to the switched tracking combination $\trackcomb_\ell$. In order to prevent integrator windup and consequent control saturation during the PI tracking phase, under this strategy we first track via the P-controller, switching to a PI-controller once the tracking error becomes small enough. Note that this is a fairly simple anti-windup scheme, with the main goal of keeping the control law continuous and thereby the IPA analysis as tractable as possible; in practice more standard anti-windup algorithms may be preferred.

\noindent\textbf{Comparison of the parameterizations.} The practical parameterization clearly loses optimality, the degree of which depends on the position sampling rate and the chosen feedback gains. On the other hand, the practical implementation gives rise to attractive properties. One is the ease of initialization. Assume that we want to initialize the system with a given visiting schedule of targets. Then, all we have to do is decide how long we want to track each of the targets. Trying to initialize the optimal parameterization for mobile targets becomes tedious and complex; it requires estimates of where to intercept the targets and where agents will end up after each tracking phase. Such information is rarely available in practice. Additionally, the practical implementation is naturally more robust to perturbations due to its closed loop design.

% \begin{figure}
%     \centering
%     \includegraphics[width=0.5\linewidth]{figures/suboptimality/fig.pdf}
%     \caption{Loss of optimality of parameterization II ($\agent_2$, red, dashed) visualized for a proportional feedback driven controller in discrete time. The parameterization I ($\agent_1$, green, solid) is able to switch its mode once it matches the targets position ($\target$, blue, solid), as it is event based.}
%     \label{fig:suboptimality}
% \end{figure}
\section{Optimizing the System Parameters}\label{sec:ipa}
The parametric controllers introduced in the preceding section define a hybrid system for each agent. We denote the feasible set of parameters as $\Theta$, the precise form of which depends on the chosen parameterization. The \ac{ocp}~\eqref{min:ocp} is now reduced to the parametric optimization problem 
\[ \minimize_{\allparams \in \Theta}~J(\allparams). \]
In order to perform a gradient descent method we need the gradient of $J$ with respect to $\theta$; we find this using \ac{ipa} \cite{cassandras2010perturbation}. It was shown in~\cite{cassandras2012optimal, zhou2018optimal} that the derivative of the cost function with respect to the parameters is given by
\begin{equation*}
    \frac{\partial J(\mathbf{\allparams})}{\partial \allparams} = \frac{1}{T} \sum_{k=1}^{K} \sum_{i = 1}^M \int_{\eventtime_{k-1}(\mathbf{\allparams})}^{\eventtime_{k}(\mathbf{\allparams})} \frac{\partial R_i(\sigma)}{\partial \allparams} d\sigma,
\end{equation*}
where $\eventtime_0, \eventtime_1, \dots, \eventtime_{K}$ describe the event times when mode switches occur in the hybrid system, and $K$ denotes the total number of events (note $\eventtime_0 = 0$ and $\eventtime_{K} = T$). We now denote by $x^\prime(t) = \frac{\partial x(\allparams, t)}{\partial \allparams}$ the Jacobian matrix, where $x = (\agent, R)$ is the collection of states. We point out that the target positions and velocities could be included as states, however, since they do not depend on the system parameters they can be omitted. We now compute the gradients of $R_i$ by applying the \ac{ipa} equation~\cite{cassandras2010perturbation}
\begin{equation}\label{eq:ipa:1}
    \frac{d}{dt} x^\prime(t) =  \frac{\partial f_k(t)}{\partial x} x^\prime(t) + \frac{\partial f_k(t)}{\partial \theta}
\end{equation}
over an inter-event interval $[\eventtime_k, \eventtime_{k+1})$ with boundary condition
\begin{equation}\label{eq:IPA:2}
    x^\prime(\eventtime_k^+) = x^\prime(\eventtime_k^-) + \left( f_{k-1}(\eventtime_k^-) - f_k(\eventtime_k^+) \right) \eventtime_k^\prime.
\end{equation}
Clearly, $R_i^\prime(t) = 0$ if $i \in \Z(t)$. Otherwise, we use~\eqref{eq:ipa:1} to obtain
\begin{equation}\label{eq:nabla_R}
    \frac{d}{dt} \frac{\partial R_i(t)}{\partial \allparams_j}
     = -B_i \frac{\partial p_{ij}(t)}{\partial \agent_j} \frac{\partial \agent_j(t)}{\partial \allparams_j} \prod\limits_{\ell \neq j} (1 - p_{i\ell}(t)).
\end{equation}
The gradient of $R$ may experience discontinuities. This can only be the case when events occur that change the dynamics of $R$. As previously shown in~\cite{cassandras2012optimal},
\begin{equation}
    \frac{\partial R_i(\eventtime_k^+)}{\partial \param_j} = \begin{cases}
        \frac{\partial R_i(\eventtime_k^-)}{\partial \param_j}, &\ift R_i(\eventtime_k) \neq 0, \\
        0, &\ift R_i(\eventtime_k) = 0.
    \end{cases}
\end{equation}
The remaining unknown term in~\eqref{eq:nabla_R} is the gradient of $s$ with respect to the system parameters. Applying~\eqref{eq:ipa:1} once again results in 
\begin{equation}\label{eq:generic:IPA}
    \frac{d}{dt} \agent^\prime(t) = \frac{\partial f_k(t)}{\partial s}\agent^\prime(t) + \frac{\partial f_k(t)}{\partial \theta},
\end{equation}
with boundary condition 
\begin{equation}\label{eq:IPA:boundary}
    \agent^\prime(\tau_k^+) = \agent^\prime(\tau_k^-) + \left(f_{k-1}(\tau_{k}^-) - f_{k}(\tau_k^+) \right) \tau_k^\prime.
\end{equation}
Note that the analysis up to this point was independent of the control parameterization. We now compute the specific forms of~\eqref{eq:generic:IPA} and~\eqref{eq:IPA:boundary} separately for the individual parameterizations. In both cases we fix an index $j \in \A$ and refrain from explicitly writing it out.

\noindent\textbf{IPA for the optimal parameterization.} We recall from~\eqref{eq:optimal:param:control} that the optimal control is described by two modes: \begin{inlineenum}
    \item the switching mode $f_\ell^s(t) = \sign(\switchpos_\ell - s(t_{\ell-1}))$; and
    \item the tracking mode $f_\ell^t(t) = \trackcomb_\ell^\top \dtarget(t)$.
\end{inlineenum} Let $t \in [\tau_k, \tau_{k+1})$, and assume first that the current mode is the $\ell$th tracking mode, i.e., $f_k(t) = \trackcomb_\ell^\top \dtarget(t)$. Then the r.h.s. of~\eqref{eq:generic:IPA} vanishes, with the exception being the entries $
    \frac{d}{dt} \frac{\partial \agent(t)}{\partial \alpha_{\ell,i}} = \dtarget_i(t)$ for all $i \in \T$. Otherwise, if the current mode is $f_\ell^s(t)$, then $\agent^\prime$ is constant. We are left with having to identify the time derivatives of the boundary condition~\eqref{eq:IPA:boundary}. These values only have to be computed for mode switches that change the dynamics of the agent, since otherwise $f_{k-1} = f_k$ is a continuous transition between the modes. There are precisely two cases that cause such switches: ones that are caused by reaching a switching point and ones that are caused by finishing a tracking period.

If the switch is triggered by reaching the switching point $\switchpos_\ell$, then the event is endogenous and triggered by the switching function, which we define by $g_\ell(\agent(\allparams, t), \allparams) = \switchpos_\ell - \agent(t)$, becoming zero. Then, as shown in~\cite{cassandras2010perturbation}, we find
\begin{equation*}
    \begin{split}
        \tau_k^\prime &= -\left( \frac{\partial g_k}{\partial s} f_{k-1}(\tau_k^-)\right)^{-1} \left( \frac{\partial g_k}{\partial \allparams} + \frac{\partial g_k}{\partial s} x^\prime(\tau_k^-)\right) \\
        &= \sign(\switchpos_\ell - \agent(\bar{t}_{\ell-1})) \left( \frac{\partial g_k}{\partial \allparams} - x^\prime(\tau_k^-)\right),
    \end{split}
\end{equation*}
where $\frac{\partial g_k}{\partial \allparams}$ vanishes for all parameters except for $\frac{\partial g_k}{\partial \switchpos_\ell} = 1$.

On the other hand, if the switch is caused by leaving a tracking period, then the switch is an induced event triggered at time $\tau_k = \tau_m + \phi_\ell$, where $\tau_m$ is the most recent switching time of the agent's mode. From this we find that $\tau_k^\prime$ evolves continuously except for the parameter $\trackinter_\ell$, for which the new partial is given by $\frac{\partial \tau_k}{\partial \trackinter_\ell} = 1$, since this parameter did not affect any previous event times.

\noindent\textbf{IPA for the practical parameterization.} The practical parameterization implicitly defines four modes: 
\begin{inlineenum} \item $f_\ell^1$ for constant positive bang control; \item $f_\ell^{-1}$ for constant negative bang control; \item $f_\ell^\mathrm{P}$ for the purely proportional tracking control while respecting the control bounds; and \item $f_\ell^\mathrm{PI}$ for a PI tracking control respecting the control bounds. \end{inlineenum} Then, as before, we find that $s^\prime$ is constant during the modes $f_\ell^1$ and $f_\ell^{-1}$. Now let us assume that $f_\ell^\mathrm{P}$ is active. Then, we find $\frac{\partial f_k(t)}{\partial s} = -\Kp$, whereas $\frac{\partial f_k(t)}{\partial \allparams} = 0$ for all parameters, except $\frac{\partial f_k(t)}{\partial \trackcomb_{\ell,i}} = \Kp\target_i(t)$. And finally, if the PI-control law $f_\ell^\mathrm{PI}$ is active, then $\frac{\partial f_k(t)}{\partial s} = -(\Kp + \Ki(t - \tilde{t}_\ell))$, and again $\frac{\partial f_k(t)}{\partial \allparams} = 0$ for all parameters except 
\begin{equation*}
\frac{\partial f_k(t)}{\partial \alpha_{\ell,i}} = \Kp\target_i(t) + \Ki \int_{\tilde{t}_\ell}^{t} \target_i(\sigma) d\sigma.
\end{equation*}

Again, the state derivatives $\agent^\prime$ may experience discontinuities due to~\eqref{eq:IPA:boundary}. This only occurs when an agent's mode switches. Recall that the controller~\eqref{eq:PI:controller} was designed to provide a continuous function during the interval $(\bar{t}_{\ell-1}, \bar{t}_\ell)$, thus switches within the modes of~\eqref{eq:PI:controller} do not lead to discontinuities in the state derivatives. On the other hand, if the switch is caused by leaving the $\ell$th tracking period, then the switch is an induced event triggered at time $\eventtime_k = \trackinter_0 + \trackinter_1 + \dots + \trackinter_\ell$. We can directly compute $\frac{\partial \tau_k}{\partial \trackcomb} = 0$ and $\frac{\partial \tau_k}{\partial \trackinter_m} = 1$ for all $m \leq \ell$.

\noindent\textbf{Finding a feasible search direction.}
With all gradients determined via IPA, we are almost ready to perform a gradient descent method. However, in order for the tracking combination constraint to remain satisfied, the step direction with respect to the tracking constraints, denoted by $p_\trackcomb$, must be chosen appropriately, i.e., its components must add up to zero for each tracking period $\ell$ and agent $j$. We do so by computing a feasible step direction via the least squares problem
\begin{mini!}
    {\stepdirection_\alpha}{\| \frac{\partial J}{\partial \alpha} - \stepdirection_\alpha \|_2^2}{}{\label{min:LP}}
    \addConstraint{0 \leq \alpha_{\ell i j} + \stepdirection_{\alpha_{\ell i j}}}{\leq 1~}{\forall \ell, i, j,}
    \addConstraint{\sum_{m=1}^M \stepdirection_{\alpha_{\ell m j}}}{ = 0}{~\forall \ell,j,}
\end{mini!}
where $\stepdirection_{\alpha_{\ell i j}}$ is the $i$th component of the step direction for agent $j$ in the $l$th tracking period. The global solution of this convex optimization problem yields the steepest gradient of $J$ respecting the tracking parameter constraints. For the remaining parameters, we directly use the IPA gradients. In order to achieve cost function descent at each iterate, we choose a step length via the Armijo backtracking technique~\cite{armijo1966minimization}.

\begin{figure}
    \centering
    \includegraphics[width=\figwidth]{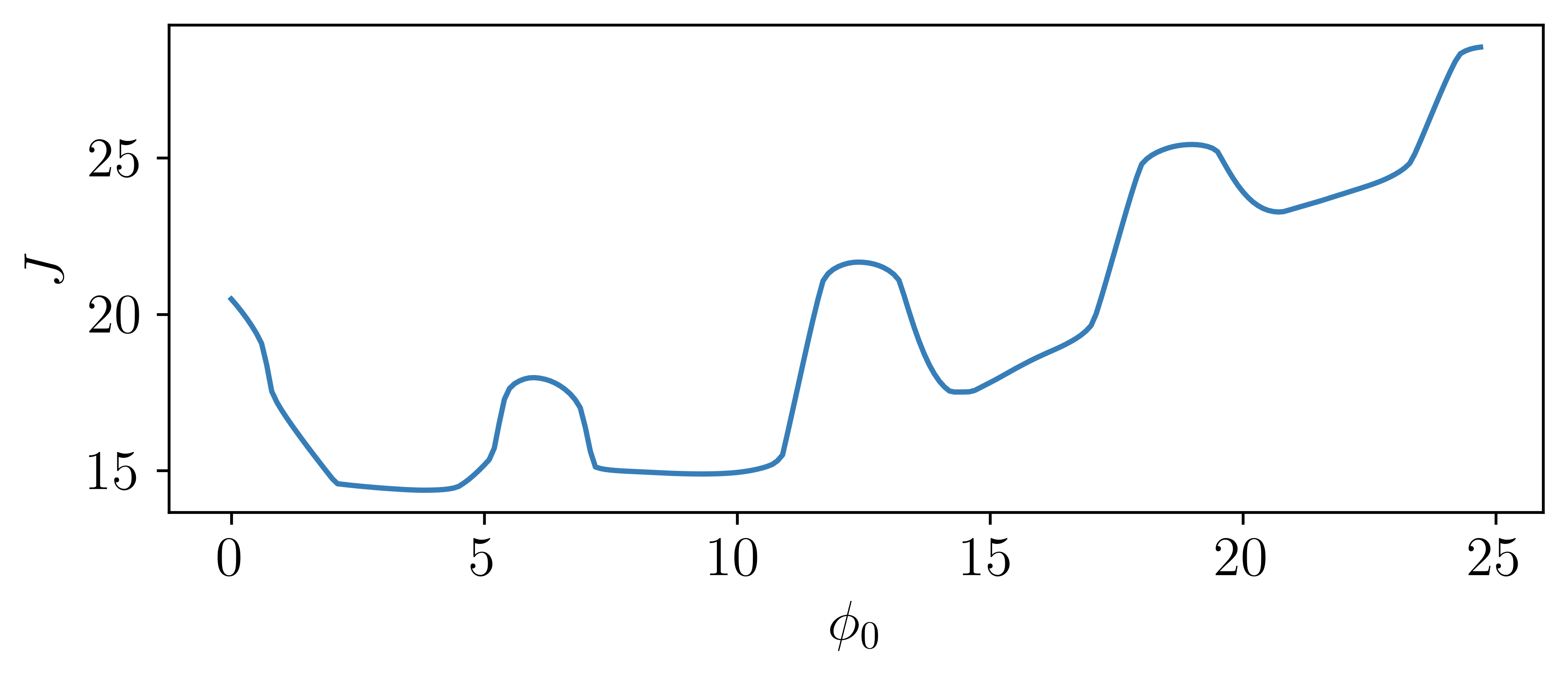}
    \includegraphics[width=\figwidth]{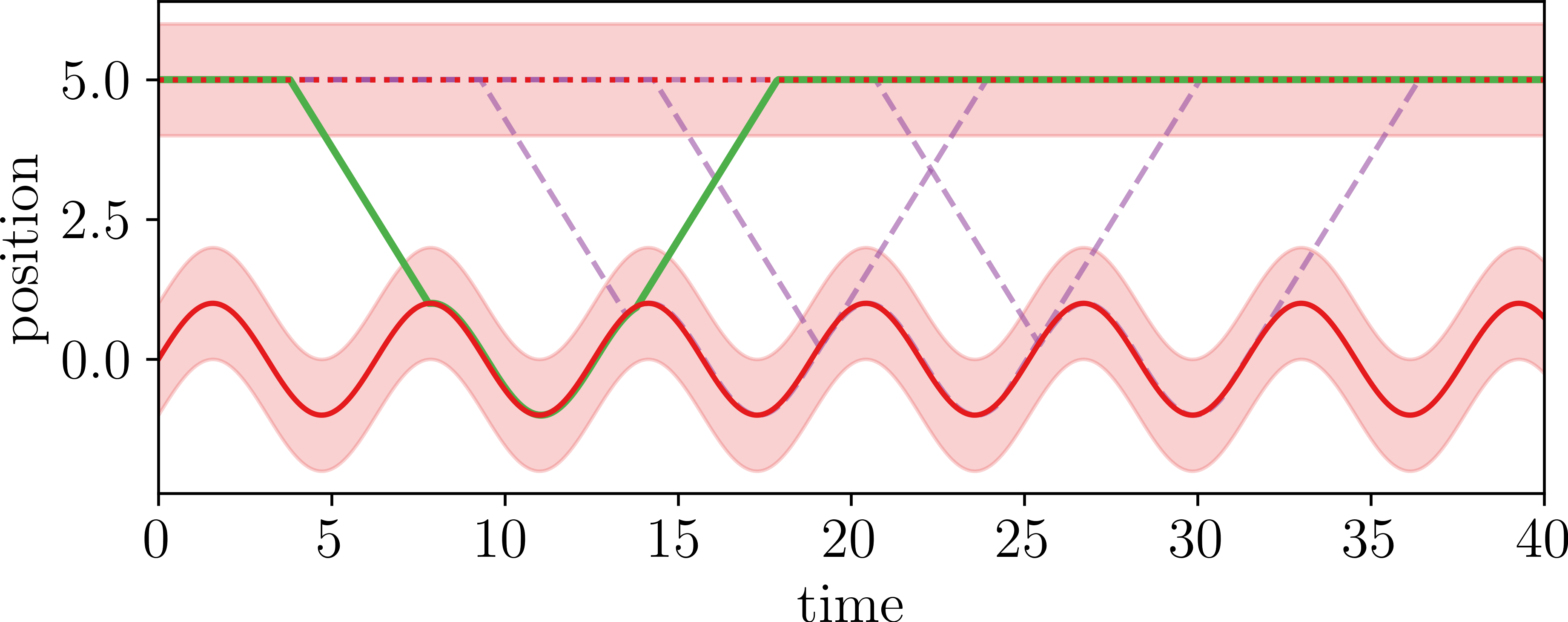}
    \caption{Depiction of the highly nonconvex cost (top) together with respective local (purple) and global (green) solutions (bottom) when fixing all but one parameter. Note that the sensing range (red) is shown around the targets as opposed to around the agents.}
    \label{fig:local:optimality:intervals}
\end{figure}
\noindent\textbf{Challenges in finding good local solutions}~\label{sec:local:solutions}
Though we are now able to optimize the system parameters over a convex set, and gradients are available for suitable initializations, the problem remains difficult to solve due to the local nature of the gradient descent procedure. For the one-dimensional persistent monitoring problem for static targets, it was demonstrated that finding the global solution of a problem with fixed target visiting sequence is tractable~\cite{zhou2018optimal}.~\figref{fig:local:optimality:intervals} demonstrates that this changes drastically in the mobile target case. The cost, even with respect to a single parameter, can be highly nonconvex. Moreover, the cost function is in parts extremely steep, whereas for other segments it is very flat. This combination presents a challenge for gradient descent methods and highlights the need to select appropriate step sizes in order to avoid stagnation and remain in the contraction area of a desirable solution.

Another difficulty, which is typical for event based algorithms, is the potential lack of event excitation~\cite{khazaeni2016event}. This occurs in the persistent monitoring problem if the agents change their modes while not sensing targets. In such cases the \ac{ipa} gradients fail to estimate the true cost gradient resulting in a failure of the optimization scheme. Though beyond the scope of this paper, the methods discussed in~\cite{khazaeni2016event} could help overcome this problem.

\begin{figure}
    \centering
    \includegraphics[width=\columnwidth]{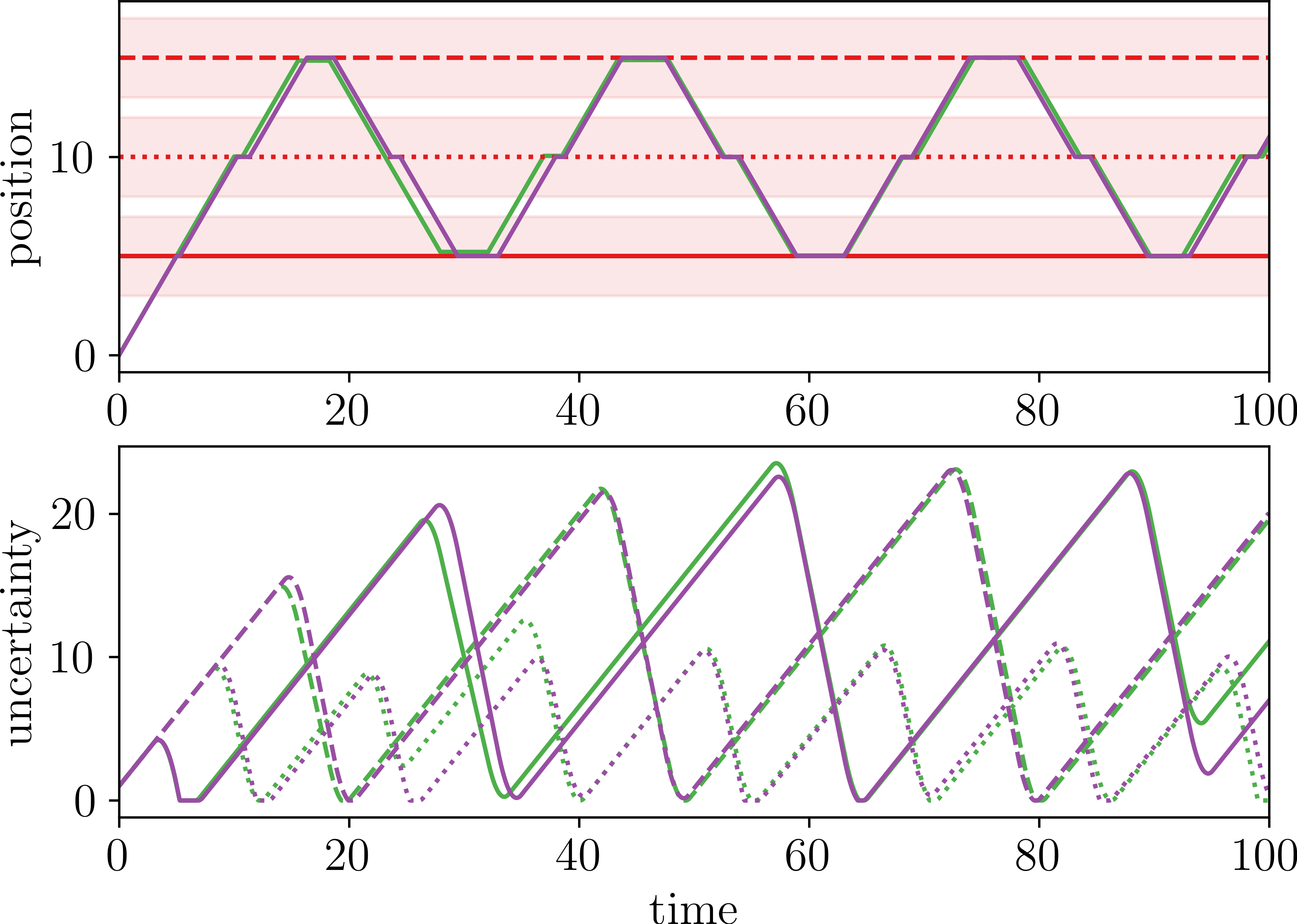}
    \includegraphics[width=\columnwidth]{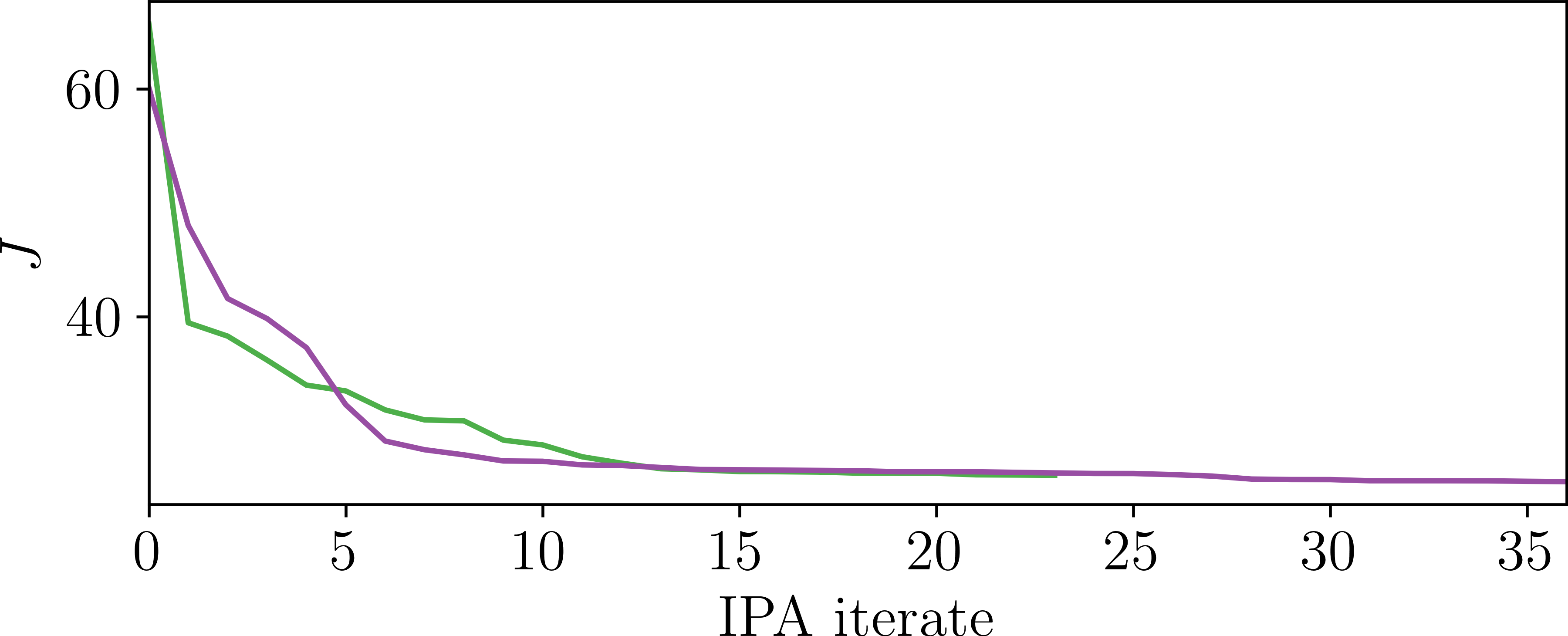}
    \caption{Results for the same experiment as conducted in~\cite{zhou2018optimal}. The top plot shows the comparison of the agent trajectories obtained from the optimal (green) and practical (purple) parameterizations. The targets are depicted in red. The second plot shows the evolution of the target uncertainties, where the line style specifies the target and the color specifies the used parameterization. The third plot shows the cost reduction for the \ac{ipa} iterates.}
    \label{fig:exp:zhou}
\end{figure}
\section{Numerical Experiments}\label{sec:numerical:experiments}
\noindent\textbf{Comparison to existing methods.} We first compare the proposed methods to the related algorithm from~\cite{zhou2018optimal} that was designed for static targets. We apply the new parameterizations to a static target experiment, the globally optimal cost of which was shown to be $J^\ast = 25.07$~\cite{zhou2018optimal}.~\figref{fig:exp:zhou} shows the results for both the optimal and practical parameterizations. The solution quality of the practical ($25.54$) and the optimal ($25.61$) are comparable to the local solution of the previous method ($25.54$), which indicates that the generalized methods remain applicable to static targets as well. The fact that the practical parameterization slightly outperforms the optimal one in this specific case is due to local convergence behaviors, and other initializations may lead to different results.

\noindent\textbf{Simultaneous sensing scenarios.} The theoretical results in this paper were proven under the assumption that only one target could be within an agent's sensing range at a time. In~\figref{fig:exp:non:isolated} we demonstrate the fact that describing the target tracking via convex combinations allows the proposed parameterizations to adapt to scenarios where this assumption is violated. This allows agents to place themselves between targets, which is particularly useful if targets are so close to each other that they share a \textit{deadzone}, i.e., an area in which an agent can be positioned such that the uncertainty of both targets becomes non-increasing. In such a case, the uncertainty of both targets can be driven to zero and kept at that level, which is only possible if the agent stays within that deadzone. 
\begin{figure}
    \centering
    \includegraphics[width=\figwidth]{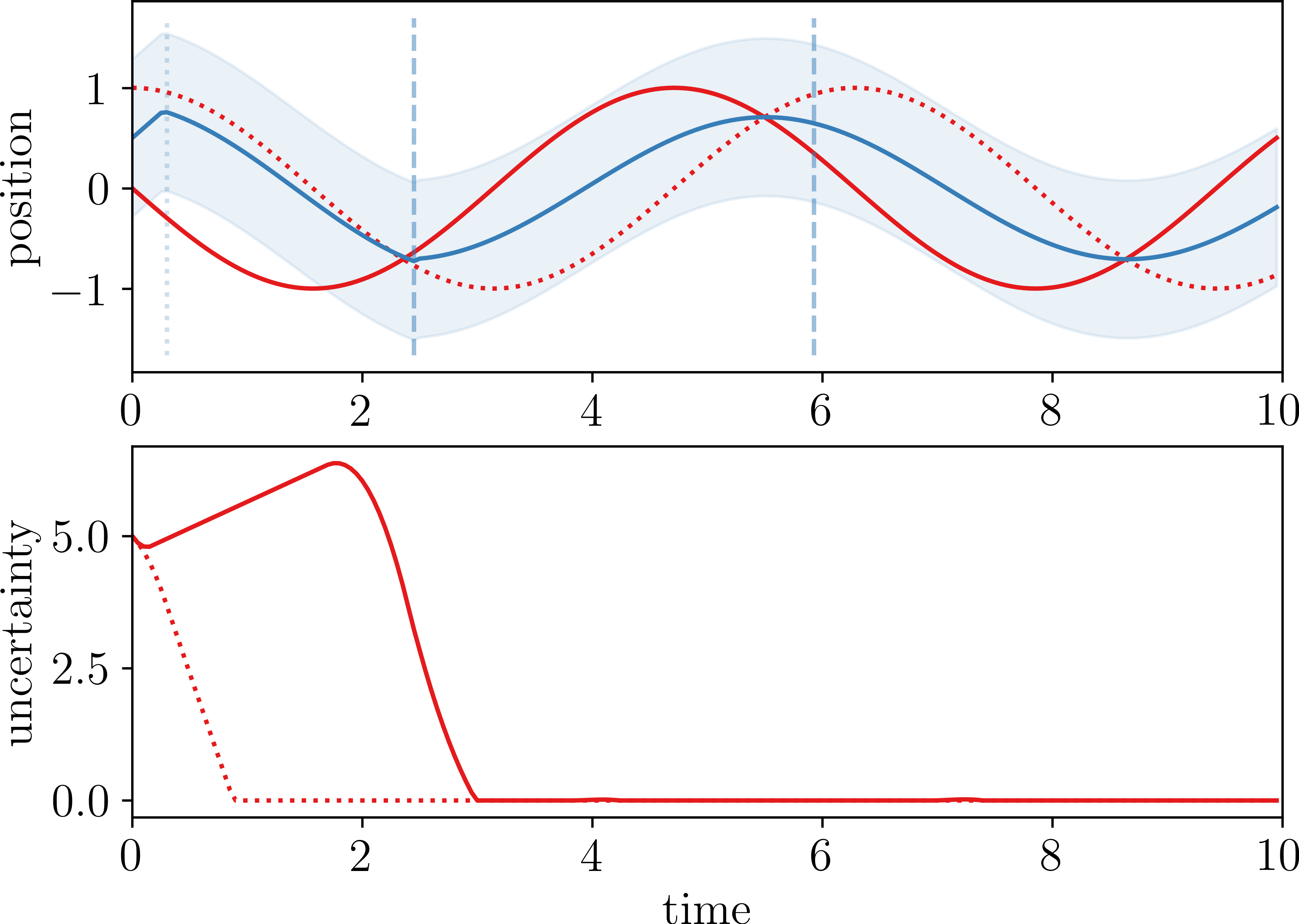}
    \caption{Simultaneous sensing experiment with two targets (red solid and dotted) and one agent (blue solid). The sensing range (blue shaded) is shown on the agent.}
    \label{fig:exp:non:isolated}
\end{figure}

\noindent\textbf{Robustness towards noise.} As previously discussed in~\cite{zhou2018optimal}, the \ac{ipa} analysis provides gradients that are naturally robust towards errors in the estimation of the uncertainty growth rate $A$. This is due to the fact that $A$ only implicitly affects the switching times of the uncertainty dynamics without explicitly appearing in the gradient evaluations.

In this section we want to show that the practical parameterization is additionally robust towards noisy measurements. In particular, we assume that each target's positional measurement is given by $\target_i^{\mathrm{est}}(t) = \target_i(t) + \kappa_1 \nu_{i}(t)$ for $\nu_i(t) \sim \N(0,1)$ and some scaling factor $\kappa_1 > 0$. We ran $1000$ repetitions of a fixed persistent monitoring problem with $M = 4, N = 2, T = 50$, $L = 4$, and randomized initial visiting sequences. We repeated the same experiment for the optimal parameterization with noisy target velocity estimates $\dtarget_i^{\mathrm{est}}(t) = \dtarget_i(t) + \kappa_2 \nu_{i}(t)$, where $\mu_i(t) \sim \N(0,1)$ and $\kappa_2 > 0$.~\figref{fig:uncertainty} compares the cost distributions of the initializations to the resulting trajectories obtained through the respective gradient descent methods. Both methods are able to reduce the cost drastically, though the practical parameterization achieves notably better results. This is due to the fact that the practical version incorporates tracking error feedback into its control law. On the other hand, the optimal version simply tries to copy a target's motions, though without validating its proximity. The practical parameterization's natural resiliency to noise also leads to significantly better initializations, which in turn leads to better local solutions.

\begin{figure}
    \centering
    \includegraphics[width=\figwidth]{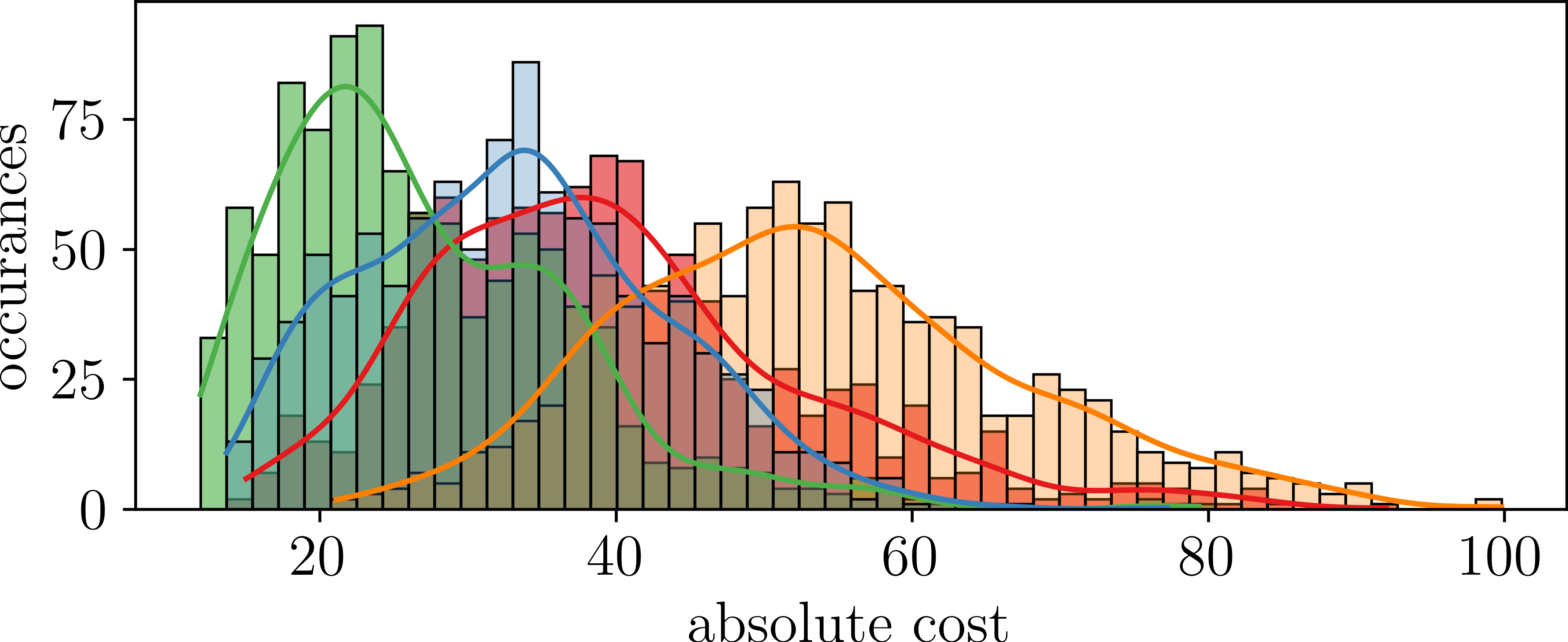}
    \caption{Depiction of the randomized experiment. The cost distribution of the initializations is depicted in orange (optimal) and red (practical), whereas the optimized cost distributions are shown in blue (optimal) and green (practical).}
    \label{fig:uncertainty}
\end{figure}

\section{Conclusion and Future Work}\label{sec:conclusion:future}

In this paper, we showed the existence of optimal solutions for the one-dimensional persistent monitoring problem of mobile targets by reducing the infinite-dimensional \ac{ocp}~\eqref{min:ocp} to a parametric optimization problem. We characterized the optimal control, which motivated the design of an event based optimization scheme. We then proposed a suboptimal parametric description of the system that proved to perform robustly in various numerical experiments.

%The difficulties discussed in~\secref{sec:local:solutions} with respect to finding good initializations show that the design of a scheduler on the higher level remains challenging due to its combinitorial nature. Potential strategies to overcome the addressed problems could consist of homotopy approaches, e.g., by introducing an augmented cost term, varying the sensing range, or varying the nonconvexity induced by the motion of the targets.

In future work, we will consider the extension to more realistic second-order agent dynamics, extending the problem to the infinite horizon case through the use of periodic patterns, and the consideration of problem formulations in two-dimensional settings.

%Space permitting, we can mention that tee nice parameterization in 1D does not carry over to 2D and teh problem is even more challenging

\bibliographystyle{ieeetr}
\bibliography{bibtex/main}

\end{document}